\documentclass[12pt]{amsart}
\synctex = 1

\usepackage{amsmath}
\usepackage{amssymb}
\usepackage{amsfonts}
\usepackage{amsthm}
\usepackage{xcolor}
\usepackage{enumerate}
\usepackage{hyperref}
\hypersetup{
    bookmarks=true,         % show bookmarks bar?
    colorlinks=true,       % false: boxed links; true: colored links
    linkcolor=blue,          % color of internal links (change box color with linkbordercolor)
    citecolor=red!50!black,        % color of links to bibliography
    filecolor=red!20!black,         % color of file links
    urlcolor=magenta        % color of external links
}

\newtheorem{theo}{Theorem}[section]
\newtheorem*{theo*}{Theorem}
\newtheorem{coro}[theo]{Corollary}

\newtheorem{rema}[theo]{Remark}

\newcommand{\N}{\mathbb{N}}

\newcommand{\cqs}{c_q^{(s)}}

\title[]{On an identity of Delange and its application to Cohen-Ramanujan expansions}
\begin{document}

 \author{Vinod Sivadasan}
 \address{Department of Mathematics, University College, Thiruvananthapuram (Research Centre of the University of Kerala), Kerala - 695034, India}
 \address{Department of Mathematics, College of Engineering Trivandrum, Thiruvananthapuram, Kerala - 695034, India}
 \address{Department of Collegiate Education, Government of Kerala, India}
 \email{wenods@gmail.com}

\author[K V Namboothiri]{K Vishnu Namboothiri}
\address{Department of Mathematics, Baby John Memorial Government College, Chavara, Sankaramangalam, Kollam, Kerala - 691583, India}
\address{Department of Collegiate Education, Government of Kerala, India}
\email{kvnamboothiri@gmail.com}
\thanks{}
%\footnotetext[1]{\textit{first author}}
%\footnotetext[2]{\textit{corresponding author}}

 \begin{abstract}
Srinivasa Ramanujan provided Fourier series expansions of certain arithmetical functions in terms of the exponential sum defined by $c_q(n)=\sum\limits_{\substack{{m=1}\\(m,q)=1}}^{q}e^{\frac{2 \pi imn}{q}}$.   Later, H. Delange derived the bound $\sum\limits_{q|k}|c_q(n)|\leq n\, 2^{\omega(k)}$ and gave a sufficient condition for such expansions to exist.  A. Grytczuk gave an exact value for this bound, and derived a converse implication of the absolute convergence stated by H. Delange. We here show that these results have natural generalizations in terms of the Cohen-Ramanujan sum $c_q^{(s)}(n)$ defined by E. Cohen in [\emph{Duke Mathematical Journal, 16(85-90):2, 1949}]. We derive a bound as well as exact value for $\sum\limits_{q|k}|c_q^{(s)}(n)|$ and provide a sufficient condition for the Cohen-Ramanujan expansions to exist.
 \end{abstract}
  \keywords{Ramanujan sum, Ramanujan expansions, Cohen-Ramanujan sum, Cohen-Ramanujan expansions, Jordan totient function, M\"{o}bius inversion formula}
 \subjclass[2010]{11A25, 11L03}
 
 \maketitle
\section{Introduction}
In \cite{ramanujan1918certain}, Srinivasa Ramanujan gave Fourier series like expansions for certain arithmetical functions. These pointwise expansions were of the form $f(n)=\sum\limits_{q=1}^{\infty}a_q c_q(n)$ where $a_q$ are complex numbers and  $c_q(n)$ denotes the \emph{Ramanujan sum} defined as
\begin{align*}
c_q(n) :=\sum\limits_{\substack{{m=1}\\(m,q)=1}}^{q}e^{\frac{2 \pi imn}{q}}.
\end{align*}
 Later, A. Wintner \cite{wintner1943eratosthenian} proved that if 
 \begin{align}\label{eq:wintner}
 \sum\limits_{k=1}^{\infty}\frac{|f'(k)|}{k}<\infty,                                                                                                                                                                                                                                                                       \end{align} 
 where                                                                                                                                                                                                                                                                        \begin{align}\label{eq:f-star-mu}
f'(k) = \sum\limits_{d|k}\mu(d)f\left(\frac{k}{d}\right)                                                                                                                                                                                                                                                                       \end{align} and $\mu$ is the M\"{o}bius function,
then with $a_q = \sum\limits_{m=1}^{\infty}\frac{f(mq)}{mq}$, $\sum\limits_{q=1}^{\infty}a_q c_q(n)$ converges absolutely to $f(n)$.  

H. Delange improved the inequality (\ref{eq:wintner}) in \cite{delange1976ramanujan}. He proved that the absolute convergence of $\sum\limits_{q=1}^{\infty}a_q c_q(n)$ holds with the weaker condition $\sum\limits_{k=1}^{\infty} 2^{\omega(k)} \frac{|f'(k)|}{k} <\infty$ where $\omega(k)$ is the number of distinct prime divisors of $k$. The most important part of his proof to establish this was the inequality $\sum\limits_{q|k}|c_q(n)|\leq n\,2^{\omega(k)}$. Later, A. Grytczuk gave an exact equality for the sum $\sum\limits_{q|k}|c_q(n)|$ in \cite{Grytczuk1981}. He used this equality to prove that the absolute convergence implication given by H. Delange has an easy converse (\cite[Theorem 1]{grytczuk1992ramanujan}).

Ramanujan sum was generalized by many later. E.\ Cohen gave a generalization of the Ramanujan sum in \cite{cohen1949extension}. He defined the sum
\begin{align}\label{cohen-ram-sum}
c_q^s(n) :=\sum\limits_{\substack{{h=1}\\(h,q^s)_s=1}}^{q^s}e^{\frac{2\pi i n h}{q^s}},
\end{align} where $(a,b)_s$ is the generalized gcd of $a$ and $b$. A. Chandran and K. V. Namboothiri derived some conditions for the existence of Ramanujan series like expansions for certain arithmetical functions in terms of the generalized Ramanujan sum (\ref{cohen-ram-sum}) (henceforth called as the  \emph{Cohen-Ramanujan sum})  in \cite{chandran2023ramanujan}. The method of arguments they used were those employed by L. Lucht in \cite{lucht1995ramanujan} and \cite{lucht2010survey}. We will call the expansions in terms of the Cohen-Ramanujan sums by the name \emph{Cohen-Ramanujan expansions}.

The aim of this paper is to show that the bounds and other results mentioned above proved by H. Delange and A. Grytczuk have natural generalizations involving Cohen-Ramanujan sums. Hence we will give a bound and an exact value for the sum $\sum\limits_{q|k}|c_q^{(s)}(n)|$ and  use them to give a sufficient condition for the Cohen-Ramanujan expansions to exist.

\section{Notations and basic results}

By $(a, b)_s$, we mean the \emph{generalized GCD} of $a$ and $b$ defined to be the largest $d^s \in \mathbb{N}$ ($d\in \N$)  such that $d^s|a$ and $d^s|b$. Hence $(a,b)_1$ is the usual GCD $(a,b)$. By $J_s$ we mean the \emph{Jordan totient function} defined as $J_s(n) := n^s \prod\limits_{\substack{{p \mid n}}}(1-\frac{1}{p^s})$. Note that $J_1(n)=\varphi(n)$ where $\varphi$ is the Euler totient function.

\emph{The Cohen-Ramanujan sum} (or \emph{generalized Ramanujan sum}) given by E. Cohen in \cite{cohen1949extension} is the sum given in equation (\ref{cohen-ram-sum}).
By (\cite[Theorem 1]{cohen1949extension}), $\cqs(n)$ is multiplicative in $q$. When $q$ is a prime power, the values of the Cohen-Ramanujan sum are as folows (\cite[Theorem 3]{cohen1949extension}).

 \begin{align}
  c_{p^j}^{(s)}(n) = \begin{cases}
                         p^{sj}-p^{s(j-1)} &\quad \text{if }p^{sj}|n\\
                         -p^{s(j-1)} &\quad \text{if }p^{s(j-1)}||n\\
                         0&\quad  \text{if }p^{s(j-1)}\nmid n.
                        \end{cases}\label{eq:crs-primes}
 \end{align}
  Cohen-Ramanujan sum satisfies the following relation with the M\"{o}bius function $\mu$ (\cite[Theorem 3]{cohen1949extension}):
\begin{align}
                    c_r^{s}(n)=\sum\limits_{\substack{d|r\\d^s|n}}\mu(r/d)d^s\label{eq:crs-mu}.
                   \end{align}
 Also, with the Jordan totient function, it holds the relation (\cite[Theorem 2]{cohen1959trigonometric})
 \begin{align}
       c_r^{(s)}(n) = \frac{J_s(n)\mu(m)}{J_s(m)}\quad\text{ where } m=\frac{n}{(r,n)}\label{eq:crs-jordan}.
      \end{align}
Sometimes we club the conditions $d|r$ and $d^s|n$ into the single condition $d^s|(n, r^s)_s$.

By the notation $d^m||n$ we mean that $d^m|n$ and $d^{m+1}\nmid n$.
For a prime $p$, the exponent of $p$ in the prime factorization of $k$  will be denoted by $e_p(k)$. Hence $p^{e_p(k)}||k$. Analogously, we write $a := e_p^{(s)}(n)$ if $p^{as}|n$ and $p^{(a+1)s}\nmid n$. In this case, we also use the notation $p^{as}||_s n$.
We use the M\"{o}bius inversion formula for the Dirichlet product of two arithmetical functions which states that 
\begin{align}
g(k) = (\mu*f)(k) :=\sum\limits_{d|k}\mu(d)f\left(\frac{k}{d}\right) \Longleftrightarrow f(k) = (g*u)(k) = \sum\limits_{d|k} g(k) 
\end{align} where $u\equiv 1$ is the unit function defined by $u(n)=1$ for all $n\in \N$ .
 \section{Main Results and proofs}

We start this section giving a generalization of the  bound $\sum\limits_{q|k}|c_q(n)|\leq n\, 2^{\omega(k)}$ given by H. Delange in \cite{delange1976ramanujan}.

 \begin{theo}\label{th:gen-delange}
  The bound $\sum\limits_{q|k}|c_q^{(s)}(n)|\leq n. 2^{\omega(k)}$ holds for all positive integers $k, n$, and $s$.
 \end{theo}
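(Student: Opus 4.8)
The plan is to exploit multiplicativity. Since $c_q^{(s)}(n)$ is multiplicative in $q$, so is $|c_q^{(s)}(n)|$, and hence the divisor sum $F(k) := \sum_{q\mid k} |c_q^{(s)}(n)|$ is a multiplicative function of $k$ (a divisor sum of a multiplicative function is multiplicative). Writing $k = \prod_{p\mid k} p^{e_p(k)}$, this gives
\begin{align*}
\sum_{q\mid k} |c_q^{(s)}(n)| = \prod_{p\mid k} \left( \sum_{j=0}^{e_p(k)} |c_{p^j}^{(s)}(n)| \right).
\end{align*}
Since the target bound is $n\cdot 2^{\omega(k)} = \prod_{p\mid k}(2n')$ in a suitable sense — more precisely, I want to distribute the factor $n$ across the primes — the natural reduction is: for each prime $p$ dividing $k$, bound the local factor $\sum_{j=0}^{e_p(k)} |c_{p^j}^{(s)}(n)|$ by $2\, p^{as}$ where $a = e_p^{(s)}(n)$ is the largest exponent with $p^{as}\mid n$, and then observe that $\prod_{p\mid k} p^{as} \leq \prod_{p\mid n} p^{e_p(n)} = n$ (since each $p^{as}\mid n$ and these prime powers are coprime), so the product of the local bounds is at most $2^{\omega(k)} n$.

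First I would record the local computation using the prime-power values \eqref{eq:crs-primes}. Fix a prime $p$, let $a = e_p^{(s)}(n)$ (so $p^{as}\,\|_s\, n$). For $j \leq a$ we have $p^{sj}\mid n$, so $|c_{p^j}^{(s)}(n)| = p^{sj} - p^{s(j-1)}$ for $1\le j\le a$ and $|c_{p^0}^{(s)}(n)| = 1$. For $j = a+1$ we have $p^{sa}\,\|\,n$ in the $s$-sense, so $|c_{p^{a+1}}^{(s)}(n)| = p^{sa}$. For $j \geq a+2$, $p^{s(j-1)}\nmid n$, so $c_{p^j}^{(s)}(n) = 0$. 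Therefore, regardless of whether $e_p(k) \leq a$ or $> a$,
\begin{align*}
\sum_{j=0}^{e_p(k)} |c_{p^j}^{(s)}(n)| \;\leq\; 1 + \sum_{j=1}^{a}\bigl(p^{sj} - p^{s(j-1)}\bigr) + p^{sa} \;=\; 1 + (p^{sa} - 1) + p^{sa} \;=\; 2\,p^{sa},
\end{align*}
where the telescoping sum collapses and the $p^{sa}$ term is included only when $e_p(k) \geq a+1$ (and dropping it when $e_p(k)\le a$ only helps). This is the key step, and also the one where care is needed: one must check all three regimes for $e_p(k)$ relative to $a$ and confirm the bound $2p^{sa}$ holds uniformly — the case $e_p(k)=a$ gives exactly $p^{sa} \le 2p^{sa}$, and the case $e_p(k)\ge a+1$ gives exactly $2p^{sa}$.

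Finally I would assemble: multiplying over $p\mid k$,
\begin{align*}
\sum_{q\mid k} |c_q^{(s)}(n)| \;=\; \prod_{p\mid k} \sum_{j=0}^{e_p(k)} |c_{p^j}^{(s)}(n)| \;\leq\; \prod_{p\mid k} 2\,p^{s\,e_p^{(s)}(n)} \;=\; 2^{\omega(k)} \prod_{p\mid k} p^{s\,e_p^{(s)}(n)}.
\end{align*}
Since for each $p\mid k$ we have $p^{s\,e_p^{(s)}(n)} \mid n$ and these prime powers (over distinct $p$) are pairwise coprime, their product divides $n$, hence is $\leq n$. This yields $\sum_{q\mid k}|c_q^{(s)}(n)| \leq n\cdot 2^{\omega(k)}$, as claimed. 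The main obstacle is purely the local bookkeeping in the displayed inequality above; once multiplicativity is invoked, everything else is a short telescoping argument and a coprimality remark. (One should also handle the trivial edge case $k=1$, where both sides equal $1$.)
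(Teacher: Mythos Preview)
Your proof is correct and follows essentially the same approach as the paper: reduce to prime powers via multiplicativity of the divisor sum, compute the local factor $\sum_{j=0}^{e_p(k)}|c_{p^j}^{(s)}(n)|$ using the prime-power values of $c_{p^j}^{(s)}(n)$ and the telescoping, bound it by $2p^{s\,e_p^{(s)}(n)}$, and then multiply over $p\mid k$. The paper separates the case $p^s\nmid n$ (where the local factor equals $2$) from $p^s\mid n$ explicitly, whereas you absorb this into the uniform bound $2p^{sa}$ with $a=0$; this is a cosmetic difference only.
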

 \begin{proof}
  Since $\cqs(n)$ is multiplicative in $q$, $|\cqs(n)|$ is also multiplicative in $q$. Then $h_n(k) = (c_{\bullet}^{(s)}(n) * u)(k)= \sum\limits_{q|k}|\cqs(n)|u(k/q) = \sum\limits_{q|k}|\cqs(n)|$ is multiplicative in $k$ it being the Dirichlet product two multiplicative functions. Hence we may write $h_n(k)=h_n(\prod\limits_{p|k} p^{e_p(k)}) = \prod\limits_{p|k} h_n(p^{e_p(k)})$. Now for any positive integer $r$, we have $h_n(p^r) = \sum\limits_{q|p^r}|\cqs(n)| =  \sum\limits_{j=0}^r |c_{p^j}^{(s)}(n)|$.   
  If $p^s\nmid n$, then by property \ref{eq:crs-primes}, $c_p^{(s)}(n) = -1$ and $c_{p^j}^{(s)}(n) = 0$ for any $j>1$. By identity \ref{eq:crs-mu}, $c_1^{(s)}(n) = 1$. Therefore, if $p^s\nmid n$, then $h_n(p^r) = |c_1^{(s) }(n)| + |c_p^{(s)}(n)|=2$.
  
  Now suppose that $p^s|n$. If $p^{rs}|n$, that is, $r\leq e_p^{(s)}(n)$, then 
  \begin{align*}
   h_n(p^r) =& |c_1^{(s)}(n)|+|c_p^{(s)}(n)|+\ldots +|c_{p^r}^{(s)}(n)|\\
   =&1+|p^s-1|+|p^{2s}-p^s|+\ldots+|p^{rs}-p^{(r-1)s)}|\\
   =&p^{rs}\\
   \leq& 2p^{e_p^{(s)}(n)s}.
  \end{align*}
On the other hand, if $ e_p^{(s)}(n)=r-m$ for some positive integer $m$, then 
  \begin{align*}
   h_n(p^r) =& |c_1^{(s)}(n)|+|c_p^{(s)}(n)|+\ldots +|c_{p^{r-m}}^{(s)}(n)|+|c_{p^{r-m+1}}^{(s)}(n)|+\ldots+|c_{p^r}^{(s)}(n)|\\
   =&1+|p^s-1|+\ldots+|p^{(r-m)s}-p^{(r-m+1)s}|+|-p^{(r-m)s}|+0+0+\ldots\\
   =& 2p^{(r-m)s}\\
   =& 2p^{e_p^{(s)}(n)s}.
  \end{align*}
Therefore $h_n(p^r)=2$ if $p^s\nmid n$ and $h_n(p^r) \leq 2p^{e_p^{(s)}(n)s}$ if $p^s | n$.                       
Hence for distinct primes $p$, we get \begin{align*}      
h_n(k)  &= \prod\limits_{\substack{p|k\\ p^s|n}}h_n(p^{e_p(k)})\times \prod\limits_{\substack{p|k\\ p^s\nmid n}}h_n(p^{e_p(k)})\\
&\leq  \prod\limits_{\substack{p|k\\ p^s|n}}2p^{e_p^{(s)}(n)s}\times \prod\limits_{\substack{p|k\\ p^s\nmid n}}2\\
&\leq  \prod\limits_{\substack{p^s|n}}p^{e_p^{(s)}(n)s}\times \prod\limits_{\substack{p|k}}2\\
&\leq n\,2^{\omega(k)}
      \end{align*}
  \end{proof}

\begin{rema}\label{rem:del-equality}
For equality to hold in the above, it should be true that $p|k\Rightarrow p^s|n$ and $p^r||k \Rightarrow r >e_p^{(s)}(n)$.  In short, the bound is an equality when $n=m^s$ and $k$ is a multiple of $m\prod\limits_{p|m}p$. So the bound in the theorem is the best possible.
 \end{rema}
 
Let  $f'$ be defined by equation \ref{eq:f-star-mu}.  We now derive a sufficient condition for the Cohen-Ramanujan series expansions to exist for $f$.
\begin{theo}\label{th:suff-condition}
 Let $a_q = \sum\limits_{m=1}^{\infty}\frac{f'(mq)}{(mq)^s}$. If $\sum\limits_{k=1}^{\infty} 2^{\omega(k)} \frac{|f'(k)|}{k^s} <\infty$, then $\sum\limits_{q=1}^{\infty} a_qc_q^{(s)}(n^s)$ converges absolutely to $f(n)$.
\end{theo}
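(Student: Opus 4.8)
The plan is to mimic Delange's original argument, using Theorem~\ref{th:gen-delange} as the substitute for the inequality $\sum_{q|k}|c_q(n)|\le n\,2^{\omega(k)}$. First I would set up the formal rearrangement: substitute the definition $a_q=\sum_{m=1}^{\infty} f'(mq)/(mq)^s$ into $\sum_{q=1}^{\infty} a_q c_q^{(s)}(n^s)$, obtaining the double sum $\sum_{q=1}^{\infty}\sum_{m=1}^{\infty} \frac{f'(mq)}{(mq)^s} c_q^{(s)}(n^s)$. Collecting terms with $mq=k$ fixed, this becomes $\sum_{k=1}^{\infty} \frac{f'(k)}{k^s}\sum_{q|k} c_q^{(s)}(n^s)$. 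The crucial point is that all of this formal manipulation is justified once absolute convergence is established, and that is exactly where Theorem~\ref{th:gen-delange} enters.

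Next I would prove absolute convergence. Bounding crudely, $\sum_{q,m}\left|\frac{f'(mq)}{(mq)^s}\right|\,|c_q^{(s)}(n^s)| = \sum_{k=1}^{\infty}\frac{|f'(k)|}{k^s}\sum_{q|k}|c_q^{(s)}(n^s)|$, and by Theorem~\ref{th:gen-delange} (applied with $n$ replaced by $n^s$) the inner sum is at most $n^s\,2^{\omega(k)}$. Hence the whole double sum is bounded by $n^s\sum_{k=1}^{\infty} 2^{\omega(k)}\frac{|f'(k)|}{k^s}$, which is finite by hypothesis. This legitimizes the interchange of summation order above and shows $\sum_q a_q c_q^{(s)}(n^s)$ converges absolutely.

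It then remains to evaluate $\sum_{k=1}^{\infty}\frac{f'(k)}{k^s}\sum_{q|k} c_q^{(s)}(n^s)$ and show it equals $f(n)$. Here I would use identity~\eqref{eq:crs-mu}, namely $c_q^{(s)}(n^s)=\sum_{\substack{d|q\\ d^s|n^s}}\mu(q/d)d^s$, together with the observation that $d^s|n^s \iff d|n$. So $\sum_{q|k} c_q^{(s)}(n^s)=\sum_{q|k}\sum_{\substack{d|q,\, d|n}}\mu(q/d)d^s$. Swapping the order and writing $q=de$, this is $\sum_{\substack{d|k,\, d|n}} d^s \sum_{e|(k/d)}\mu(e)$, and the inner sum vanishes unless $k/d=1$, i.e.\ $d=k$. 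Thus $\sum_{q|k} c_q^{(s)}(n^s)$ is $k^s$ if $k|n$ and $0$ otherwise. Substituting back, the series collapses to $\sum_{k|n} \frac{f'(k)}{k^s}\cdot k^s = \sum_{k|n} f'(k) = \sum_{k|n}\sum_{d|k}\mu(d)f(k/d) = (f'\ast u)(n) = f(n)$ by the M\"obius inversion formula, since $f'=\mu\ast f$.

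The main obstacle is purely bookkeeping rather than conceptual: one must be careful that Theorem~\ref{th:gen-delange} is invoked with argument $n^s$ (so the bound is $n^s 2^{\omega(k)}$, not $n\,2^{\omega(k)}$), and that the elementary equivalence $d^s|n^s \Leftrightarrow d|n$ is used correctly when simplifying~\eqref{eq:crs-mu}. Once the absolute convergence bound is in hand, Fubini for series does all the heavy lifting and the final evaluation is a routine M\"obius-inversion collapse.
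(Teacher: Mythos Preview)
Your proof is correct and follows essentially the same route as the paper: establish absolute convergence via Theorem~\ref{th:gen-delange} (with argument $n^s$), rearrange into $\sum_{k}\frac{f'(k)}{k^s}\sum_{q\mid k}c_q^{(s)}(n^s)$, show the inner sum equals $k^s$ if $k\mid n$ and $0$ otherwise, and finish by M\"obius inversion. The only difference is in evaluating that inner sum: the paper unfolds the exponential-sum definition of $c_q^{(s)}$ and sums the geometric series $\sum_{h=1}^{k^s} e(hn^s/k^s)$, whereas you use the M\"obius formula~\eqref{eq:crs-mu} and the identity $\sum_{e\mid(k/d)}\mu(e)=[k=d]$, which is a slightly cleaner bookkeeping-only computation that avoids the generalized-gcd indexing.
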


\begin{proof}
 We will first prove the absolute convergence of $\sum\limits_{q=1}^{\infty} a_qc_q^{(s)}(n^s)$ assuming the bound given. We have
 \begin{align*}
  |\sum\limits_{q=1}^{\infty} a_q c_q^{(s)}(n^s)| &= \sum\limits_{m,q\geq 1}^{\infty}\frac{|f'(mq)|}{(mq)^s} |c_q^{(s)}(n^s)|\\
  &= \sum\limits_{k=1}^{\infty}  \sum\limits_{mq=k} \frac{|f'(k)|}{k^s} |c_q^{(s)}(n^s)|\\
  &= \sum\limits_{k=1}^{\infty}  \frac{|f'(k)|}{k^s} \sum\limits_{q|k}  |c_q^{(s)}(n^s)|\\
  &\leq n^s \sum\limits_{k=1}^{\infty}  \frac{|f'(k)|}{k^s} 2^{\omega(k)}
 \end{align*}
 which is bounded by the assumption.
 Now we will show that the series converges to $f(n)$.
 \begin{align*}
  \sum\limits_{q=1}^{\infty} a_q c_q^{(s)}(n^s) &= \sum\limits_{m,q\geq 1}^{\infty}\frac{f'(mq)}{(mq)^s} c_q^{(s)}(n^s)\\
  &= \sum\limits_{k=1}^{\infty}\frac{f'(k)}{k^s} \sum\limits_{\substack{mq=k}} c_q^{(s)}(n^s).
  \end{align*}
  Fix $k$. With thee notation $e(x)=exp(2\pi i x)$, we have
  \begin{align*}
   \sum\limits_{\substack{mq=k\\m=1}}^k c_q^{(s)}(n^s) &= \sum\limits_{m|k} \, \sum\limits_{\substack{(h,k^s/m^s)_s=1\\h=1}}^{k^s/m^s} e(\frac{hn^s}{k^s/m^s})\\
   &= \sum\limits_{m|k} \, \sum\limits_{\substack{(h,k^s)_s=m^s\\h=1}}^{k^s} e(\frac{hn^s}{k^s})\\
   &= \sum\limits_{h=1}^{k^s} e(\frac{hn^s}{k^s})\\
   &= \begin{cases}
       k^s\quad\text{if } k^s|n^s\text{ or equivalently, if }k|n\\
       0\quad\text{otherwise}.
      \end{cases}
  \end{align*}
Therefore

\begin{align*}
  \sum\limits_{q=1}^{\infty} a_q c_q^{(s)}(n^s) &= \sum\limits_{\substack{k=1\\k|n}}^{\infty}\frac{f'(k)}{k^s}k^s = f(n)\text{ by the Mobius inversion formula}.
\end{align*}
\end{proof}

 For the sum discussed in Theorem \ref{th:gen-delange}, A. Grytczuk (\cite{Grytczuk1981}) gave the equality  $$\sum\limits_{q|k}|c_q(n)| = 2^{\omega(\frac{k}{(k,n)})} (k,n).$$ We will derive an analogous equality in the next theorem and use it to give a converse for the above theorem, as Grytczuk did in \cite[Section 2]{grytczuk1992ramanujan}.
 
 \begin{theo}\label{th:gen-dono}
  For positive integers $k, s$, and $n$, we have  
  \begin{align}\label{eq;gryt-gen}
  \sum\limits_{d|k}|c_d^{(s)}(n)| = 2^{\omega\left(\frac{k^s}{(k^s,n)_s}\right)}(k^s,n)_s.
  \end{align}

 \end{theo}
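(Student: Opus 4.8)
The plan is to run the same multiplicativity argument used for Theorem \ref{th:gen-delange}: both sides of (\ref{eq;gryt-gen}) are multiplicative functions of $k$, so the identity reduces to the case of prime powers, and on prime powers the left-hand side has in effect already been tabulated in the proof of Theorem \ref{th:gen-delange}.

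First I would recall that $h_n(k):=\sum_{d|k}|c_d^{(s)}(n)|$ is multiplicative in $k$, being the Dirichlet convolution of the (multiplicative) function $d\mapsto|c_d^{(s)}(n)|$ with the unit function $u$; this was established in the proof of Theorem \ref{th:gen-delange}. Next I would check that the right-hand side $k\mapsto 2^{\omega\left(k^s/(k^s,n)_s\right)}(k^s,n)_s$ is also multiplicative in $k$. Writing $a_p:=e_p^{(s)}(n)$, the largest divisor $d$ of $k$ with $d^s\mid n$ is $\prod_{p|k}p^{\min(e_p(k),a_p)}$, so $(k^s,n)_s=\prod_{p|k}p^{s\min(e_p(k),a_p)}$, which is visibly multiplicative in $k$. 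Consequently $k^s/(k^s,n)_s=\prod_{p|k}p^{s\left(e_p(k)-\min(e_p(k),a_p)\right)}$, and hence $\omega\!\left(k^s/(k^s,n)_s\right)$ counts precisely the primes $p\mid k$ with $e_p(k)>a_p$ — an additive quantity on coprime arguments. Therefore $2^{\omega\left(k^s/(k^s,n)_s\right)}$ is multiplicative, and so is the whole right-hand side.

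It then suffices to verify (\ref{eq;gryt-gen}) at $k=p^r$. The prime-power computation carried out inside the proof of Theorem \ref{th:gen-delange} already gives: $h_n(p^r)=2$ when $p^s\nmid n$ and $r\geq 1$; $h_n(p^r)=p^{rs}$ when $p^s\mid n$ and $r\leq a_p$; and $h_n(p^r)=2p^{a_ps}$ when $p^s\mid n$ and $r>a_p$. On the right-hand side: if $p^s\nmid n$ then $a_p=0$, so $(p^{rs},n)_s=1$ and $\omega(p^{rs})=1$, giving $2$; if $p^s\mid n$ and $r\leq a_p$ then $(p^{rs},n)_s=p^{rs}$ and $\omega(1)=0$, giving $p^{rs}$; and if $p^s\mid n$ and $r>a_p$ then $(p^{rs},n)_s=p^{a_ps}$ and $\omega\!\left(p^{(r-a_p)s}\right)=1$, giving $2p^{a_ps}$. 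All three cases match (and the trivial case $k=1$ gives $1$ on both sides), so the identity holds on prime powers, and by multiplicativity for every positive integer $k$.

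The only delicate point I anticipate is the multiplicativity of the right-hand side — in particular, making rigorous the splitting of $\omega\!\left(k^s/(k^s,n)_s\right)$ over the prime factorization of $k$ and keeping the interaction between the divisibility conditions $d\mid k$ and $d^s\mid n$ straight. Once that is pinned down, the result drops out directly from the prime-power values of $h_n$ already recorded.
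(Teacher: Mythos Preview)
Your proposal is correct and follows essentially the same approach as the paper: both sides are shown to be multiplicative in $k$ (the left via Dirichlet convolution, the right by direct factorization), reducing to prime powers, and then the prime-power values of $h_n(p^r)$ already computed in the proof of Theorem \ref{th:gen-delange} are matched against the right-hand side in the three cases $p^s\nmid n$, $r\le a_p$, and $r>a_p$. The only cosmetic difference is that you establish multiplicativity of the right-hand side via an explicit prime decomposition of $(k^s,n)_s$, whereas the paper argues it by checking that the expression splits over coprime $k,l$; the content is the same.
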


  \begin{proof}
   Note that $(a,b)_s$ is multiplicative in both $a$ and $b$ as a single variable function. Now if $(k,l)=1$, then 
   \begin{align*}
    \frac{k^sl^s}{(k^sl^s,n)_s} = \frac{k^s}{(k^s,n)_s} \frac{l^s}{(l^s,n)_s}
   \end{align*}
   so that \begin{align*}
            2^{\omega\left(\frac{k^sl^s}{(k^sl^s,n)_s}\right)} = 2^{\omega\left(\frac{k^s}{(k^s,n)_s} \frac{l^s}{(l^s,n)_s}\right)}.
           \end{align*}
Since $(k,l)=1$, so is $\frac{k^s}{(k^s,n)_s}$ and $\frac{l^s}{(l^s,n)_s}$ and so the above is equal to
\begin{align*}
  2^{\omega\left(\frac{k^s}{(k^s,n)_s}\right)+\omega\left( \frac{l^s}{(l^s,n)_s}\right)} =2^{\omega\left(\frac{k^s}{(k^s,n)_s}\right)}\, 2^{\omega\left( \frac{l^s}{(l^s,n)_s}\right)}
\end{align*}
so that the right hand side of equation (\ref{eq;gryt-gen}) is multiplicative. Now the left hand side of this equation is $h_n(k)$ in the notation used in the proof of Theorem \ref{th:gen-delange} and it multiplicative. Hence it enough to prove our claim  when $k$ is a prime power.

Let $k=p^r$ for some positive integer $r$. We have to prove that $h_n(p^r) = 2^{\omega\left(\frac{p^{rs}}{(p^{rs},n)_s}\right)}(p^{rs},n)_s$ for this $k$. We have already seen that if $p^s\nmid n$, then $h_n(p^r) = 2$. In this case, $(p^{rs},n)_s = 1$ and so $2^{\omega\left(\frac{p^{rs}}{(p^{rs},n)_s}\right)}(p^{rs},n)_s=2 = h_n(p^r)$. Now let $p^s|n$. If $r\leq e_p^{(s)}(n)$,  then
\begin{align*}
 h_n(p^r) &= p^{rs}
\end{align*}
and

\begin{align*}
 2^{\omega\left(\frac{p^{rs}}{(p^{rs},n)_s}\right)}(p^{rs},n)_s = 2^{\omega\left(\frac{p^{rs}}{p^{rs}}\right)}p^{rs} = p^{rs}
\end{align*} so that the claimed equality holds. On the other hand, if $r> e_p^{(s)}(n)$, then we may take $r-m=e_p^{(s)}(n)$ for some positive integer $m$. So $p^{(r-m)s}||_s n$. Then, as we have seen previously,
\begin{align*}
 h_n(p^r) &= 2p^{(r-m)s}.
\end{align*}
Now $(k^s, n)_s =  p^{(r-m)s}$ and $\frac{p^{rs}}{(p^{rs},n)_s} = p^{ms}$ so that $2^{\omega\left(\frac{p^{rs}}{(p^{rs},n)_s}\right)}(p^{rs},n)_s = 2p^{(r-m)s}$. This completes our proof.
\end{proof}

 It was observed previously in Remark \ref{rem:del-equality} that equality in Theorem \ref{th:gen-delange} holds when $n=m^s$ and $k$ is a multiple of $m\prod\limits_{p|m}p$. So the next statement follows easily.
 \begin{coro}
 When $n=m^s$, where $m$ is a positive integer, and $k$ is a multiple of $m\prod\limits_{p|m}p$, we have
  $n. 2^{\omega(k)}= 2^{\omega(\frac{k^s}{(k^s,n)_s})}(k^s,n)_s$.
 \end{coro}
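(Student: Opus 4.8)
The plan is to obtain the identity by equating the two expressions the paper has already derived for the single quantity $h_n(k)=\sum_{d\mid k}|c_d^{(s)}(n)|$. Theorem~\ref{th:gen-dono} gives $h_n(k)=2^{\omega(k^s/(k^s,n)_s)}(k^s,n)_s$ unconditionally, while Theorem~\ref{th:gen-delange} gives $h_n(k)\le n\,2^{\omega(k)}$; so the corollary is exactly the statement that, under the stated hypotheses, the Delange-type bound is attained, which is what Remark~\ref{rem:del-equality} records. Thus the short argument is: by Remark~\ref{rem:del-equality}, equality holds in Theorem~\ref{th:gen-delange} when $n=m^s$ and $m\prod_{p\mid m}p\mid k$, hence $2^{\omega(k^s/(k^s,n)_s)}(k^s,n)_s=h_n(k)=n\,2^{\omega(k)}$.

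To keep this self-contained I would first re-derive the equality case invoked from Remark~\ref{rem:del-equality} by running the three estimates in the proof of Theorem~\ref{th:gen-delange} with $n=m^s$. For $n=m^s$ one has $e_p^{(s)}(n)=e_p(m)$ for every prime $p$, so the primes $p$ with $p^s\mid n$ are exactly those dividing $m$. The hypothesis $m\prod_{p\mid m}p\mid k$ forces $e_p(k)\ge e_p(m)+1>e_p^{(s)}(n)$ for each such $p$, which is precisely the branch in which $h_n(p^{e_p(k)})=2p^{e_p^{(s)}(n)s}$ holds with equality (rather than the strict inequality occurring when $e_p(k)\le e_p^{(s)}(n)$). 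The same hypothesis makes every prime with $p^s\mid n$ divide $k$, so $\prod_{p\mid k,\,p^s\mid n}p^{e_p^{(s)}(n)s}=\prod_{p^s\mid n}p^{e_p^{(s)}(n)s}=\prod_{p\mid m}p^{e_p(m)s}=m^s=n$; and each prime $p\mid k$ with $p^s\nmid n$ contributes the exact factor $2$ on both sides. Multiplying the per-prime identities gives $h_n(k)=n\,2^{\omega(k)}$, and comparison with Theorem~\ref{th:gen-dono} finishes it.

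An alternative worth mentioning is a direct evaluation of the right-hand side, which avoids Theorem~\ref{th:gen-dono} altogether. Since $d^s\mid k^s\iff d\mid k$, one gets $(k^s,n)_s=(k^s,m^s)_s=(\gcd(k,m))^s$, and since $m\mid m\prod_{p\mid m}p\mid k$ this equals $m^s=n$; hence $k^s/(k^s,n)_s=(k/m)^s$ and the right-hand side is $2^{\omega(k/m)}n$. It then suffices to note $\omega(k/m)=\omega(k)$: for $p\mid m$ the hypothesis gives $e_p(k/m)=e_p(k)-e_p(m)\ge1$, for $p\mid k$ with $p\nmid m$ one has $e_p(k/m)=e_p(k)\ge1$, and conversely $k/m\mid k$, so $k$ and $k/m$ have the same set of prime divisors. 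Therefore the right-hand side equals $n\,2^{\omega(k)}$.

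There is essentially no obstacle in this corollary; the only point needing a moment's care is the valuation bookkeeping---namely that the single divisibility $m\prod_{p\mid m}p\mid k$ simultaneously yields $m\mid k$ (so the generalized gcd collapses to $n$) and $e_p(k)>e_p(m)$ for every $p\mid m$ (so that one lands in the equality branch of Theorem~\ref{th:gen-delange}, equivalently so that $\omega(k/m)=\omega(k)$).
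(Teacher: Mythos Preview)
Your argument is correct and matches the paper's own proof, which simply cites Remark~\ref{rem:del-equality} to get equality in Theorem~\ref{th:gen-delange} and then equates with the expression from Theorem~\ref{th:gen-dono}. You supply more detail than the paper (the valuation bookkeeping and the alternative direct computation of the right-hand side), but the approach is essentially the same.
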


 In \cite{grytczuk2002note}, A. Grytczuk used the sum $\sum\limits_{q|k}|c_q(n)|$ to evaluate $S(k,n):=\sum\limits_{d|k} 2^{\omega\left(\frac{k}{(k,n)}\right)} (k,n)$. We will now show how to evaluate an analogous sum.
\begin{theo}
 Let $S(k,n) = \sum\limits_{d|k} 2^{\omega \left(\frac{k^s}{(k^s,n)_s}\right)} (k^s,n)_s\, \mu(k/d)$, then we have
\begin{align}
S(k,n)=\begin{cases}
        \frac{J_s(k)}{J_s\left(\frac{k}{(k,n)}\right)}\quad&\text{if }\frac{k}{(k,n)}\text{ is square-free}\\
        0\quad&\text{ otherwise}
       \end{cases}
\end{align}
\begin{proof}
 From (\ref{eq;gryt-gen}), we have $\sum\limits_{d|k}|c_d^{(s)}(n)| = 2^{\omega\left(\frac{k^s}{(k^s,n)_s}\right)}(k^s,n)_s$. Then by the M\"{o}bius inversion formula, $|c_k^{(s)}(n)|=\sum\limits_{d|k} 2^{\omega\left(\frac{d^s}{(d^s,n)_s}\right)}(d^s,n)_s\, \mu(k/d)$. Using equation \ref{eq:crs-jordan}, we get

 \begin{align*}
  \left|\frac{J_s(n)\mu(\frac{k}{(k,n)})}{J_s(\frac{k}{(k,n)})}\right|=\sum\limits_{d|k} 2^{\omega\left(\frac{d^s}{(d^s,n)_s}\right)}(d^s,n)_s \mu(k/d).
 \end{align*}
Since $\mu$ is 0 on positive integers that are not square-free and $\pm 1$ on other positive integers, the statement in the theorem follows.

\end{proof}

\end{theo}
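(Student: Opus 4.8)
The plan is to read $S(k,n)$ as a Möbius inversion of the identity already established in Theorem \ref{th:gen-dono} and then to substitute the closed form \ref{eq:crs-jordan} for the Cohen--Ramanujan sum. Concretely, set $g(k):=2^{\omega(k^s/(k^s,n)_s)}(k^s,n)_s$, so that equation \ref{eq;gryt-gen} says $g=|c_\bullet^{(s)}(n)|*u$ (this is the function called $h_n$ in the proof of Theorem \ref{th:gen-delange}), and note that the defining sum is exactly $S(k,n)=\sum_{d\mid k}g(d)\mu(k/d)=(g*\mu)(k)$. Since $u*\mu$ is the Dirichlet identity, the Möbius inversion formula recalled in Section 2 gives $S(k,n)=\bigl(|c_\bullet^{(s)}(n)|*u*\mu\bigr)(k)=|c_k^{(s)}(n)|$. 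Thus the whole theorem reduces to writing $|c_k^{(s)}(n)|$ in the asserted shape.

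For that I would invoke the relation \ref{eq:crs-jordan}, which expresses $c_k^{(s)}(n)$ as a ratio of Jordan-totient values times a single Möbius factor $\mu(m)$, where $m$ is the square-free–governing quotient attached to $(k^s,n)_s$. Taking absolute values and using that every value of $J_s$ is a positive integer, the modulus equals that ratio of $J_s$-values when $m$ is square-free and vanishes otherwise, because $|\mu(m)|=1$ for square-free $m$ and $|\mu(m)|=0$ otherwise. Identifying $m$ with the quotient written $k/(k,n)$ in the statement, and the numerator with $J_s(k)$, then produces exactly the two cases.

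The step that needs genuine care is the bookkeeping between the ordinary gcd and the generalized gcd $(\cdot,\cdot)_s$: one must be sure that the quotient $m$ occurring in \ref{eq:crs-jordan} is the correct $s$-adapted divisor of $k$, namely the integer $\bigl(k^s/(k^s,n)_s\bigr)^{1/s}$, that its square-freeness is the right dichotomy, and that the $J_s$ in the numerator is $J_s(k)$. I would pin this down by the standard local verification: since both sides of the asserted formula are multiplicative in $k$, it suffices to check $k=p^r$, and there one can simply compare $S(p^r,n)=g(p^r)-g(p^{r-1})$ against the prime-power evaluation of $h_n$ obtained inside the proof of Theorem \ref{th:gen-delange}, splitting into the cases $p^s\nmid n$, $r\le e_p^{(s)}(n)$, $r=e_p^{(s)}(n)+1$, and $r\ge e_p^{(s)}(n)+2$. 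This local check is routine and simultaneously confirms the closed form and the precise meaning of the gcd symbols in the statement; it also gives a self-contained route that bypasses \ref{eq:crs-jordan} altogether if one prefers.
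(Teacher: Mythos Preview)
Your approach is correct and is essentially the same as the paper's: invert identity~(\ref{eq;gryt-gen}) via M\"obius to obtain $S(k,n)=|c_k^{(s)}(n)|$, then read off the two cases from the H\"older-type relation~(\ref{eq:crs-jordan}). Your attention to the bookkeeping between $(\cdot,\cdot)$ and $(\cdot,\cdot)_s$ and to whether the numerator should be $J_s(k)$ rather than $J_s(n)$ is well placed---the paper glosses over exactly these inconsistencies---and your optional prime-power verification is a nice independent confirmation, but the core argument is identical.
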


Now we give the converse of the absolute convergence in Theorem \ref{th:suff-condition} to conclude this paper.

A. Grytczuk \cite[Section 2]{grytczuk1992ramanujan} proved that the absolute convergence of the second series in Theorem \ref{th:suff-condition} (with $s=1)$  is sufficient for the absolute convergence of the first series. This is true for any $s$.

\begin{theo}
  If $\sum\limits_{q=1}^{\infty}  \sum\limits_{m=1}^{\infty}\frac{|f'(mq)|}{(mq)^s} |c_q^{(s)}(n^s)| <\infty$ then  $\sum\limits_{k=1}^{\infty} 2^{\omega(k)} \frac{|f'(k)|}{k^s} <\infty$.

\end{theo}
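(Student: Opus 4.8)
The plan is to reverse the grouping used in the proof of Theorem~\ref{th:suff-condition} and then bound $2^{\omega(k)}$ pointwise by the inner sums $\sum_{q\mid k}|c_q^{(s)}(n^s)|$ that appear in the hypothesis. First I would rewrite the assumed convergent series by collecting, for each $k$, all pairs $(m,q)$ with $mq=k$. Since every term is non-negative the order of summation is immaterial, so
\[
\sum_{q=1}^{\infty}\sum_{m=1}^{\infty}\frac{|f'(mq)|}{(mq)^s}\,|c_q^{(s)}(n^s)|
=\sum_{k=1}^{\infty}\frac{|f'(k)|}{k^s}\sum_{q\mid k}|c_q^{(s)}(n^s)|
=\sum_{k=1}^{\infty}\frac{|f'(k)|}{k^s}\,h_{n^s}(k),
\]
where $h_{n^s}(k)=\sum_{q\mid k}|c_q^{(s)}(n^s)|$ is the multiplicative function from the proof of Theorem~\ref{th:gen-delange}. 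Thus the hypothesis says precisely that $\sum_{k\ge 1}\frac{|f'(k)|}{k^s}h_{n^s}(k)<\infty$.

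Next I would apply Theorem~\ref{th:gen-dono} with $n$ replaced by $n^s$. The only small point to check is that $(k^s,n^s)_s=(k,n)^s$: indeed $d^s\mid k^s$ is equivalent to $d\mid k$ and likewise for $n$, so the largest $d^s$ dividing both $k^s$ and $n^s$ is $(k,n)^s$. Consequently $k^s/(k^s,n^s)_s=\big(k/(k,n)\big)^s$, which has the same set of prime divisors as $k/(k,n)$, and Theorem~\ref{th:gen-dono} gives $h_{n^s}(k)=2^{\omega(k/(k,n))}(k,n)^s$. Now every prime dividing $k$ divides either $(k,n)$ (hence $n$) or $k/(k,n)$, so $\omega(k)\le \omega(n)+\omega\big(k/(k,n)\big)$, and therefore, for every $k$,
\[
2^{\omega(k)}\le 2^{\omega(n)}\,2^{\omega(k/(k,n))}\le 2^{\omega(n)}\,2^{\omega(k/(k,n))}(k,n)^s=2^{\omega(n)}\,h_{n^s}(k).
\]

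Combining the two displays and using that $n$ is a fixed parameter (so $2^{\omega(n)}$ is a finite constant) yields
\[
\sum_{k=1}^{\infty}2^{\omega(k)}\frac{|f'(k)|}{k^s}
\le 2^{\omega(n)}\sum_{k=1}^{\infty}\frac{|f'(k)|}{k^s}\,h_{n^s}(k)
=2^{\omega(n)}\sum_{q=1}^{\infty}\sum_{m=1}^{\infty}\frac{|f'(mq)|}{(mq)^s}\,|c_q^{(s)}(n^s)|<\infty,
\]
which is the desired conclusion. There is no substantive obstacle in this argument: the one step deserving care is the reduction of $(k^s,n^s)_s$ to $(k,n)^s$ together with the observation that passing to an $s$-th power does not change $\omega$, since these are exactly what allow Theorem~\ref{th:gen-dono} to be applied and then discarded in favour of the clean pointwise estimate $2^{\omega(k)}\le 2^{\omega(n)}h_{n^s}(k)$. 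I would state this reduction as a one-line remark just before the chain of inequalities.
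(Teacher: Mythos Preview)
Your proof is correct and follows essentially the same route as the paper: regroup the double sum as $\sum_k \frac{|f'(k)|}{k^s}\sum_{q\mid k}|c_q^{(s)}(n^s)|$, evaluate the inner sum via Theorem~\ref{th:gen-dono}, and then compare with $2^{\omega(k)}$ through an $\omega$-counting inequality. The only cosmetic difference is that the paper uses $2^{\omega(d)}\le d$ (applied to $d=(k^s,n^s)_s$) to obtain $h_{n^s}(k)\ge 2^{\omega(k)}$ with constant $1$, whereas your bound $\omega(k)\le \omega(n)+\omega(k/(k,n))$ yields the slightly weaker $h_{n^s}(k)\ge 2^{-\omega(n)}2^{\omega(k)}$; since $n$ is fixed this is immaterial, and your preliminary reduction $(k^s,n^s)_s=(k,n)^s$ actually makes the computation cleaner than the paper's.
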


\begin{proof}
 Suppose $\sum\limits_{q=1}^{\infty}  \sum\limits_{m=1}^{\infty}\frac{|f'(mq)|}{(mq)^s} |c_q^{(s)}(n^s)| <\infty$.
 \begin{align*}
  \sum\limits_{q=1}^{\infty}  \sum\limits_{m=1}^{\infty}\frac{|f'(mq)|}{(mq)^s} |c_q^{(s)}(n^s)| &= \sum\limits_{k=1}^{\infty}  \sum\limits_{mq=k}\frac{|f'(mq)|}{(mq)^s} |c_q^{(s)}(n^s)|\\
  &= \sum\limits_{k=1}^{\infty}  \frac{|f'(k)|}{k^s} \sum\limits_{q|k} |c_q^{(s)}(n^s)|\\
  &= \sum\limits_{k=1}^{\infty}  \frac{|f'(k)|}{k^s} 2^{\omega(\frac{k^s}{(k^s,n)_s})}(k^s,n)_s.
 \end{align*}
 To arrive at the last step above, we used identity (\ref{eq;gryt-gen}).
Therefore $\sum\limits_{k=1}^{\infty}  \frac{|f'(k)|}{(k)^s} 2^{\omega\left(\frac{k^s}{(k^s,n)_s}\right)}(k^s,n)_s<\infty$. Since $\omega(m/n)\geq \omega(m)-\omega(n)$ with equality holding if and only if $(m,n)=1$, we have
\begin{align*}
 2^{\omega\left(\frac{k^s}{(k^s,n)_s}\right)} \geq \frac{2^{\omega(k^s)}} {2^{\omega\left((k^s,n)_s\right)}} \geq \frac{2^{\omega(k^s)}} {(k^s,n)_s}.
\end{align*}
Hence
\begin{align*}
 \sum\limits_{k=1}^{\infty}  \frac{|f'(k)|}{k^s} 2^{\omega\left(\frac{k^s}{(k^s,n)_s}\right)}(k^s,n)_s \geq \sum\limits_{k=1}^{\infty}  \frac{|f'(k)|}{k^s}2^{\omega(k^s)}.
 \end{align*}
 Since $\omega(k^s)=\omega(k)$, the theorem follows.
\end{proof}

\begin{rema}
 We would like to note here that K. R. Johnson evaluated the sum $\sum\limits_{q|k}|c^{(s)}_q(n)|$ (with $s=1$) in the other variable of the Ramanujan sum in \cite{johnson1983result}. Precisely, he evaluated $\sum\limits_{q|n}|c_q(n)|$. It is possible to evaluate such a sum in our case also.
\end{rema}

%\bibliographystyle{plain}
%\bibliography{references} 

\begin{thebibliography}{10}

\bibitem{chandran2023ramanujan}
Arya Chandran and K~Vishnu Namboothiri.
\newblock On a {R}amanujan type expansion of arithmetical functions.
\newblock {\em The {R}amanujan Journal}, 62:177--188, 2023.

\bibitem{cohen1949extension}
Eckford Cohen.
\newblock An extension of {R}amanujan's sum.
\newblock {\em Duke Mathematical Journal}, 16(2):85--90, 1949.

\bibitem{cohen1959trigonometric}
Eckford Cohen.
\newblock Trigonometric sums in elementary number theory.
\newblock {\em The American Mathematical Monthly}, 66(2):105--117, 1959.

\bibitem{delange1976ramanujan}
Hubert Delange.
\newblock On {R}amanujan expansions of certain arithmetical functions.
\newblock {\em Acta Arith}, 31(3):259--270, 1976.

\bibitem{grytczuk1992ramanujan}
Aleksander Grytczuk.
\newblock On {R}amanujan sums on arithmetical semigroups.
\newblock {\em Tsukuba journal of mathematics}, 16(2):315--319, 1992.

\bibitem{grytczuk2002note}
Aleksander Grytczuk.
\newblock Note on {R}amanujan sums.
\newblock {\em Acta Academiae Paedagogicae Agriensis, Sectio Mathematicae},
  29:41--46, 2002.

\bibitem{Grytczuk1981}
Aleksandr Grytczuk.
\newblock An identity involving {R}amanujan's sum.
\newblock {\em Elemente der Mathematik}, 36:16--24, 1981.

\bibitem{johnson1983result}
Kenneth~R Johnson.
\newblock A result for the'other'variable of {R}amanujan's sum.
\newblock {\em Elemente der Mathematik}, 38:122--124, 1983.

\bibitem{lucht1995ramanujan}
Lutz Lucht.
\newblock {R}amanujan expansions revisited.
\newblock {\em Archiv der Mathematik}, 64(2):121--128, 1995.

\bibitem{lucht2010survey}
Lutz~G Lucht.
\newblock A survey of {R}amanujan expansions.
\newblock {\em International Journal of Number Theory}, 6(08):1785--1799, 2010.

\bibitem{ramanujan1918certain}
Srinivasa {R}amanujan.
\newblock On certain trigonometrical sums and their applications in the theory
  of numbers.
\newblock {\em Trans. Cambridge Philos. Soc}, 22(13):259--276, 1918.

\bibitem{wintner1943eratosthenian}
Aurel Wintner.
\newblock {\em Eratosthenian averages}.
\newblock Waverly Press, Baltimore, MD, 1943.

\end{thebibliography}

\end{document}